%%%%%%%%%%%%%%%%%%%%%%%%%%%%%%%%%%%%%%%%%%%%%%%
%
%  Non-linear Lie groups that can be realized
%  as automorphism groups of bounded domains
%  by George Shabat and Alexander Tumanov
%
%%%%%%%%%%%%%%%%%%%%%%%%%%%%%%%%%%%%%%%%%%%%%%%

\documentclass[12pt]{amsart}
\oddsidemargin -5pt
\evensidemargin 0pt
\textwidth 6.5in

\parskip 2pt % easier to read; can remove later

%standard format:
%\documentclass{amsart}

\usepackage{amssymb}

\newtheorem{thm}     {Theorem}%[section]
\newtheorem{prop}    [thm]{Proposition}

\newtheorem{lemma}   [thm]{Lemma}

\newcommand{\Hi}{{\mathbb H}}
\newcommand{\B}{\mathbb B}
\newcommand{\C}{\mathbb C}
\newcommand{\R}{\mathbb R}
\newcommand{\T}{\mathbb T}
\newcommand{\Z}{\mathbb Z}

\def\Re{{\rm Re\,}}
\def\Im{{\rm Im\,}}

\def\<{\langle}
\def\>{\rangle}
\def\({\left(}
\def\){\right)}

\def\Aut{{\rm Aut}}

\def\alert{}

\begin{document}

\subjclass[2000]{32M18, 22F50}

\title[Non-linear Lie groups that can be realized as automorphism groups of bounded domains]
{Non-linear Lie groups that can be realized \\
as automorphism groups of bounded domains}

\author{George Shabat}
\address{Russian State University for the Humanities,
Moscow, GSP-3, 125267, Russia}
\email{george.shabat@gmail.com}

\author{Alexander Tumanov}
\address{Department of Mathematics, University of Illinois,
1409 West Green St., Urbana, IL 61801, U.S.A.}
\email{tumanov@illinois.edu}

\maketitle

\begin{abstract}
We consider a problem whether a given Lie group can be realized as the group of all biholomorphic automorphisms of a bounded domain in $\C^n$.
In an earlier paper of 1990, we proved the result for connected \emph{linear} Lie groups.
In this paper we give examples of non-linear groups for which the result still holds.

Key words: linear Lie group, biholomorphic automorphism, domain of bounded type.
\end{abstract}

\section{Introduction}

Let $D\subset\C^n$ be a bounded domain. H. Cartan \cite{Cartan} proved that the group $\Aut(D)$ of all biholomorphic automorphisms of $D$ is a (real finite dimensional) Lie group.
Is the converse true? In other words, which Lie groups can be realized as $\Aut(D)$ for a bounded domain $D\subset\C^n$?

Bedford and Dadok \cite{Bedford} and Saerens and Zame \cite{Saerens} proved that every \emph{compact} Lie group can be realized as $\Aut(D)$ for a bounded strongly pseudoconvex domain $D\subset\C^n$.
On the other hand,
Wong \cite{Wong} and Rosay \cite{Rosay} proved that if $D\subset\C^n$ is bounded, strongly pseudoconvex, and $\Aut(D)$ is \emph{not compact}, then $D$ is biholomorphically equivalent to the unit ball $\B^n\subset\C^n$.
Therefore, if the group is not compact, we cannot expect to realize it as $\Aut(D)$ for a bounded strongly pseudoconvex domain $D\subset\C^n$.

A Lie group is called \emph{linear} if it is isomorphic to a subgroup of a general linear group $GL(n,\R)$ of all real
nonsingular $n\times n$ matrices.
% We recall that a \emph{connected} linear Lie group is isomorphic to a \emph{closed} subgroup of $GL(n,\R)$.

We call a domain $D\subset\C^n$ a domain of \emph{bounded type} if $D$ is biholomorphically equivalent to a bounded domain.

In an earlier paper \cite{Shabat}, we proved that every (possibly non-compact) connected \alert{linear} Lie group can be realized as $\Aut(D)$, where $D\subset\C^n$ is a strongly pseudoconvex domain of bounded type.
Winkelmann \cite{Winkelmann} and Kan \cite{Kan} proved that every connected (possibly non-linear) Lie group can be realized as $\Aut(D)$, where $D$ is a \emph{complete hyperbolic Stein manifold}.

The question whether $D$ can be chosen a \emph{bounded domain} in $\C^n$ has remained open so far.

Recall $SL(n,\R)$ denotes the group of all real $n\times n$ matrices with determinant 1. We consider connected Lie groups locally isomorphic to $SL(2,\R)$.
Among these groups only $SL(2,\R)$ itself and $PSL(2,\R):=SL(2,\R)/\{\pm I\}$ are linear (see \cite{Onishchik}).
The rest are typical examples of \emph{non-linear} Lie groups. In particular, $\widetilde{SL}(2,\R)$, the universal cover of $SL(2,\R)$ is non-linear.
Our main result is the following.

\begin{thm}\label{Main}
Let $G$ be a connected Lie group locally isomorphic to $SL(2,\R)$. Then there exists a strongly pseudoconvex domain $D$ of bounded type in $\C^4$ such that $\Aut(D)$ is isomorphic to $G$.
\end{thm}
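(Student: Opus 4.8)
The plan is to treat all groups $G$ locally isomorphic to $SL(2,\R)$ at once by writing $G=\widetilde{SL}(2,\R)/\Gamma$ for a subgroup $\Gamma$ of the center $Z\bigl(\widetilde{SL}(2,\R)\bigr)\cong\Z$, and to realize the action before addressing rigidity. On $\Hi\times\C$ I would use the action $(z,w)\mapsto\bigl(\tfrac{az+b}{cz+d},\,w+\log(cz+d)\bigr)$, where $(c,d)$ is the lower row of an element of $SL(2,\R)$. Since $cz+d$ omits $0$ on the simply connected domain $\Hi$, a continuous branch of $\log(cz+d)$ exists for each group element; the branch ambiguity $2\pi i\Z$ is exactly the center, so the action is well defined and faithful on $\widetilde{SL}(2,\R)$, the central generator acting by $w\mapsto w+2\pi i$. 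Periodizing the $w$--plane by $\Gamma$ (passing to a coordinate such as $e^{w/k}$) then yields a faithful action of $G$ itself. The key bookkeeping device is that a function of the form $\Re w+\tfrac12\log\Im z$ is $G$--invariant, because $\Im(gz)=\Im z/|cz+d|^{2}$ precisely cancels $\Re\log(cz+d)$; its level sets are single $G$--orbits, and $G$ acts simply transitively on each.

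This $\C^{2}$ picture cannot be used directly: on it $G$ already acts transitively on the candidate boundary hypersurfaces, and a strongly pseudoconvex real hypersurface whose automorphisms act transitively is locally spherical, so the domain it bounds is an unbounded model of the ball with a far larger automorphism group. The point that rescues the construction, and that I would emphasize, is that the domain $D$ I seek is \emph{unbounded} and only of bounded type: its bounded model has non-smooth boundary, so the theorem of Wong and Rosay---which requires $D$ itself to be bounded with smooth strongly pseudoconvex boundary---does not apply to the non-compact $G$. To also defeat the sphericity obstruction I would let $G$ act on $\C^{4}=\Hi\times\C^{3}$, keeping the additive shift $w\mapsto w+\log(cz+d)$ in one fibre coordinate (this carries the genuine cover, and after periodization the prescribed $G$) and using auxiliary weighted shifts in the remaining coordinates, so that the action is free and proper with $3$--dimensional orbits. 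These now have codimension $4$ in any boundary hypersurface, the boundary is far from homogeneous, and no ball appears.

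For bounded type I would exhibit an explicit biholomorphism onto a bounded domain: the base $\Hi$ is conformally $\B^{1}$, while each fibre is a vertical strip (for the universal cover) conformally equivalent to a disc, or a periodized annular region in the covers with $\Gamma\neq0$; the global map uses $\exp$ in the periodized directions and a strip--to--disc map in the non--periodized one. Simultaneously I would carry a $G$--invariant strictly plurisubharmonic defining function with definite Levi form, so that $D$ is genuinely strongly pseudoconvex. This produces a strongly pseudoconvex domain of bounded type in $\C^{4}$ with $G\subseteq\Aut(D)$.

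The crux is the reverse inclusion $\Aut(D)=G$. By Cartan's theorem $\Aut(D)$ is a Lie group acting properly, with $\mathfrak{sl}(2,\R)$ inside its Lie algebra; I would prove equality by showing that every complete holomorphic vector field on $D$ preserves the Levi geometry of the non--spherical boundary and is thereby forced into $\mathfrak{sl}(2,\R)$. If the explicit model retains extra symmetry, I would replace $D$ by a small $G$--invariant strongly pseudoconvex perturbation and run a transversality and Baire--category argument, in the spirit of Bedford--Dadok and Saerens--Zame but inside the space of $G$--invariant perturbations, to remove all non--$G$ infinitesimal and discrete automorphisms, giving $\Aut(D)^{0}=G$ with no extra components. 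The hard part will be exactly this step: keeping $\Aut(D)$ from growing (which is what forces the positive--codimension orbit structure and the passage to $\C^{4}$), preserving strong pseudoconvexity and bounded type under the perturbation, and---crucially---tracking enough global topology, via the central action $w\mapsto w+2\pi i$ and its periodization by $\Gamma$, to pin down the \emph{exact} cover $G$ rather than merely a group locally isomorphic to $SL(2,\R)$.
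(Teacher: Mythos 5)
Your diagnosis of the $\C^2$ picture is correct (the level sets of $\Re w+\tfrac12\log\Im z$ are indeed spherical), but the step that is supposed to repair it --- passing to $\Hi\times\C^3$ with ``auxiliary weighted shifts'' --- does not work, and this is the fatal gap. If the extra coordinates are shifted by $w_j\mapsto w_j+\lambda_j\log(cz+d)$, then the holomorphic change of variables $u_j=w_j-\lambda_j w_1$ turns your action into (your $\C^2$ action on $(z,w_1)$) $\times$ (the identity on $(u_2,u_3)$). Hence the orbits lie inside $2$-dimensional complex slices and are real hypersurfaces there, so they are \emph{not totally real} in $\C^4$; the homogeneous, spherical boundary problem you identified persists inside every slice; and every $G$-invariant domain admits extra automorphisms moving the $(u_2,u_3)$ directions, so $\Aut(D)\supsetneq G$ no matter how you perturb within $G$-invariant domains --- your Baire-category step cannot succeed. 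Nor can the shifts be chosen more cleverly: a holomorphic shift-cocycle $c(g,z)$ for the M\"obius action on the single base $\Hi$ is, modulo a coboundary $h(gz)-h(z)$ (which a coordinate change absorbs), a multiple of $\log(cz+d)$ --- this follows from the Lie-algebra computation $H^1(\mathfrak{sl}_2,\mathcal{O}(\Hi))=\C$, generated by the derivative cocycle --- so all fiber velocities stay proportional and the reduction above always applies. Total reality of the orbits is not a cosmetic condition: it is the hypothesis of Proposition \ref{Common} and of all the realization theorems you invoke (Bedford--Dadok, Saerens--Zame, Winkelmann, Kan), since the whole tube-plus-CR-perturbation machinery runs on small invariant neighborhoods of totally real orbits. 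The paper achieves it in a structurally different way: $PSL(2,\R)$ acts \emph{diagonally} on $\Hi^3$ (triples of distinct points), where the $3$-dimensional orbits are totally real, and only \emph{one} fiber coordinate is added; that is why the domain sits in $\C^4$.

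A second, related gap is your bounded-type argument. The fibrewise strip-to-disc map is not holomorphic in $z$: the strip over $z$ is centered at $-\tfrac12\log\Im z$, which is not the real part of any holomorphic function, and no equivariant holomorphic centering can exist --- a holomorphic $F$ with $F(gz)=F(z)+\log(cz+d)$ for all $g$ would give, on the rotation stabilizer of $i$, $e^{F(i)}=e^{-i\theta}e^{F(i)}$ for all $\theta$, forcing $e^{F(i)}=0$. This is exactly the difficulty that the paper's key Lemma is built to overcome: it produces a holomorphic $\phi(g)=\tfrac14((a+d)+i(c-b))^2$ on $PSL(2,\C)$, pulled back to the $\Hi^3$ picture, with $|\phi|>\epsilon$ on a $G$-invariant neighborhood of an orbit (the naive choice $(a+ic)^2$ fails, having zeros in every invariant neighborhood). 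Then the invariant tube $\{|w-\log\phi(z)|<1\}$ of Proposition \ref{Cover} lies over a half-plane in $w$, so bounded type is immediate, and $\phi_*:\pi_1(G)\to\Z$ being an isomorphism is what pins down the exact cover $G_k$. Your mechanism for identifying the cover (the central shift $w\mapsto w+2\pi i$ and periodization by $e^{w/k}$) is the right idea and parallels the paper's $\phi^{1/k}$ device, but it only lives in the $\C^2$ model you rightly discarded; without an analogue of the Lemma and without totally real orbits there is no domain on which to run it.
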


In the end of the paper, we give another example of a non-linear group for which a similar result holds.

\section{General results}
Recall that a \emph{group action} $G:X$ of a group $G$ on a set $X$ is a mapping $G\times X \to X$, which we denote as $(g,x)\mapsto gx$, such that
$e x=x$ and $g_1 (g_2 x)=(g_1g_2) x$.
Here $e\in G$ is the identity.

A group action $G:X$ is \emph{free} (or with no fixed points) if for every $x\in X$, the map $G\to X$, $g\mapsto g x$ is injective.

A group action $G:X$ is \emph{proper} if the mapping $G\times X \to X\times X$, $(g,x)\mapsto (g x,x)$, is proper.
Here $G$ is a topological group, $X$ is a topological space, and the action $G\times X \to X$ is continuous.

A group action $G:X$ is \emph{holomorphic} if for every $g\in G$, the map $x\mapsto gx$ is holomorphic. Here $X$ is a complex manifold.

\begin{prop}\cite{Bedford, Saerens, Shabat, Winkelmann}
\label{Common}
Let $G:\Omega$ be a holomorphic group action of a connected Lie group $G$ on a domain $\Omega\subset\C^n$. Suppose the action is proper, free, and the orbits are totally real. Then a generic smooth small tubular $G$-invariant neighborhood $D$ of each orbit is strongly pseudoconvex, and $\Aut(D)$ is isomorphic to $G$.
\end{prop}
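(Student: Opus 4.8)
The plan is to separate the statement into a geometric step, producing a $G$-invariant strongly pseudoconvex tube $D$, and a rigidity step, proving $\Aut(D)\cong G$, with the rigidity step carrying essentially all of the difficulty. For the geometric step, fix an orbit $M=Gx_0$. Freeness makes $M$ an injective image of $G$, and properness makes it a closed totally real submanifold of $\Omega$ whose $G$-invariant tubular neighborhoods are, by the slice theorem for proper free actions, $G$-equivariantly diffeomorphic to a product $G\times S$ on which $G$ acts by left translation in the first factor. Properness also furnishes a $G$-invariant Riemannian metric $\mu$ on a $G$-invariant neighborhood $U$ of $M$ (one averages an arbitrary metric against a slice-supported cutoff, the integrals converging precisely because the action is proper). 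Put $\rho={\rm dist}_\mu(\cdot,M)^2$; this is smooth near $M$, $G$-invariant, $\ge 0$, and vanishes exactly on $M$. Since $M$ is totally real, the complex Hessian of $\rho$ is positive definite along $M$: total reality forces the Levi form to be positive in the directions tangent to $M$, while the normal directions contribute ball-type positivity, by the classical computation of the Levi form of the squared distance to a totally real submanifold. Strict plurisubharmonicity is an open $G$-invariant condition (as $G$ acts holomorphically), so it persists on a $G$-invariant neighborhood of $M$, and for a generic small regular value $\epsilon>0$, existing by Sard's theorem, $D:=\{\rho<\epsilon\}$ is a smooth $G$-invariant tube whose boundary carries a positive Levi form. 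Thus $D$ is strongly pseudoconvex.

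Next I embed $G$ into $\Aut(D)$. As $D$ is $G$-invariant, restriction yields a holomorphic action $G:D$ that is still proper and free, hence effective, so the induced homomorphism $\iota\colon G\to H:=\Aut(D)$ is injective. It is also proper: if $\iota(g_n)\to h$ in $H$, then $g_nx_0\to hx_0$, and properness of $G:D$ forces $(g_n)$ to subconverge in $G$. Hence $\iota$ is a closed embedding and $G$ is realized as a closed Lie subgroup of the Lie group $H$ (a Lie group by the theorem of H. Cartan \cite{Cartan}).

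It remains to show $\iota(G)=H$, which is the crux. Because $D$ is of bounded type it is Kobayashi hyperbolic, so $H$ acts properly on $D$; consequently every isotropy group $H_p$ is compact and, by Cartan's linearization theorem, embeds faithfully into the unitary group via $h\mapsto d_ph$. The key step is to show that the core $M$ is preserved by every automorphism and that $H$ acts on $M$ through $G$. One expects $H$-invariance of $M$ by exhibiting $M$ as the locus of a biholomorphic invariant of $D$ — for instance as the maximum locus of a canonical $H$-invariant function such as the boundary distance in the Bergman metric — so that $H$ permutes this locus and, by connectedness, fixes it. Granting $H(M)=M$, the restriction map $h\mapsto h|_M$ has image containing $G$, and $G$ already acts simply transitively on $M$; thus each $h\in H$ agrees on $M$ with a unique $g\in G$, so that $k:=g^{-1}h$ fixes $M$ pointwise with $d_xk|_{T_xM}={\rm id}$.

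The remaining difficulty, and where the word \emph{generic} is indispensable, is to rule out such pointwise-trivial \emph{normal twists} $k$. Here one argues that a $k\in H_{x_0}$ fixing $M$ pointwise is determined by its differential $d_{x_0}k$, a unitary map preserving the normal-bundle data of $M$; for a tube of generic invariant thickness the induced action on the CR/Chern--Moser invariants of $\partial D$ along $M$ admits no nontrivial such symmetry, forcing $d_{x_0}k={\rm id}$ and hence $k=e$. I expect this elimination of normal symmetries — making precise that a generic $G$-invariant tube around a totally real orbit has no automorphisms beyond $G$ — to be the genuinely hard part, the two preceding paragraphs being comparatively formal. Combining the steps yields $H=\iota(G)\cong G$, as required.
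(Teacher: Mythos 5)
Your geometric step (invariant metric, squared distance, Sard) and the closed embedding $G\hookrightarrow\Aut(D)$ are fine and match what the cited constructions do implicitly. The rigidity half, however, has two genuine gaps, and they are not at the point you flag. First, the $H$-invariance of the core $M$ cannot be obtained by a soft argument (extremum locus of a canonical invariant) applied \emph{before} genericity enters, because for non-generic tubes the invariance is simply false: take $G=\R$ acting on $\C$ by real translations (proper, free, totally real orbits); every $G$-invariant tube around the orbit $\R$ is a strip, whose automorphism group is three-dimensional and acts \emph{transitively}, so no $\Aut$-invariant function has the core as its extremum locus and the core is not preserved. Your specific candidate also fails on its own terms: the Bergman metric of a bounded strongly pseudoconvex domain is complete, so ``boundary distance in the Bergman metric'' is identically infinite, and bounded type or hyperbolicity of $D$ is not among the hypotheses of the Proposition (arranging it is what the rest of the paper does). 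Second, even granting $H(M)=M$, your deduction that each $h\in H$ agrees on $M$ with some $g\in G$ is a non sequitur: simple transitivity of $G$ on $M$ gives agreement at \emph{one} point, not on all of $M$. For the round tube $\{\sum(\Im z_j)^2<\epsilon\}$ around $\R^n\subset\C^n$, the rotations $z\mapsto Az$, $A\in O(n)$, preserve the core and fix the base point but are not normal twists; so the automorphisms that genericity must eliminate include maps moving $M$, and maps preserving $M$ but acting on it outside $G$, not only the pointwise-trivial ones you consider. The logical order is thus backwards: invariance of $M$ and agreement with $G$ along $M$ are \emph{consequences} of the generic perturbation argument, not reductions that precede it.

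The other missing ingredient is boundary extension, which is the first of the two steps in the paper's proof: one shows every $f\in\Aut(D)$ extends smoothly to most of $bD$ --- by Fefferman's theorem when $G$ is compact (so that $D$ is bounded), and by the substantially harder arguments of Kan and Winkelmann when $G$ is not compact --- and only then uses Chern--Moser CR invariants of $bD$, after a generic small perturbation of the tube, to rule out all automorphisms not induced by $G$. Your final step appeals to the Chern--Moser invariants of $\partial D$ along $M$, but without the extension theorem an abstract automorphism of $D$ induces no action on $bD$ at all, so there is nothing for those invariants to constrain. Since you also explicitly defer the elimination-of-symmetries argument (``the genuinely hard part''), both essential steps of the actual proof are absent, and the reduction you do carry out rests on intermediate claims that fail without genericity.
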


The proof consists of two steps. In the first step, one proves that every $f\in \Aut(D)$ extends smoothly to the most of the boundary $bD$. In the second step, using local invariants of CR structure of $bD$ \cite{Chern-Moser}, by small perturbations, one can rule out automorphisms other than the ones induced by the action of $G$.

If $G$ is compact, then $D$ is a bounded strongly pseudoconvex domain, and the smooth extension follows by Fefferman's theorem.
In the case that $G$ is not compact, our short paper \cite{Shabat} did not include full details of the first step. The proof can be found in \cite{Winkelmann}.

Let $G$ be a Lie group that can be realized using Proposition \ref{Common}. We describe a situation in which covering groups also can be realized.

\begin{prop}\label{Cover}
Let $G$ be a connected Lie group with $\pi_1(G)=\Z$.
Let $G:\Omega\subset\C^n$ be a holomorphic free proper action with totally real orbits in a domain $\Omega$ of bounded type.
Let $\phi:\Omega\to\C^*=\C\setminus\{0\}$ be a holomorphic function such that $|\phi|>\epsilon$, here $\epsilon>0$ is constant.
Let $M$ be an orbit. Suppose $\phi$ induces an isomorphism $\phi_*:\pi_1(M)\to \Z=\pi_1(\C^*)$.
Let $G_k$ be a $k$-sheeted covering group for $G$, $1<k\le\infty$. Then there is a strongly pseudoconvex domain $D\subset\C^{n+1}$ of bounded type with $\Aut(D)=G_k$.
\end{prop}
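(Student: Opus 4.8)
The plan is to apply Proposition \ref{Common} to a lifted action of $G_k$ on a suitable domain $\Omega'\subset\C^{n+1}$ lying over $\Omega$. Write $p:\tilde G\to G$ for the universal cover, so that $\ker p=\pi_1(G)=\Z$ and $G_k=\tilde G/(k\Z)$. I would manufacture the lift from $\phi$ by a holomorphic multiplier, using the extra coordinate $\zeta$ (or $w=e^{\zeta/k}$) precisely to turn the non-faithful action of $\tilde G$ on $\Omega$ into a \emph{free} action realizing $G_k$.

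Concretely, I first construct a holomorphic cocycle. The function $(g,z)\mapsto\phi(gz)/\phi(z)$ is holomorphic and nonvanishing on $G\times\Omega$ and equals $1$ on $\{e\}\times\Omega$; pulling it back to $\tilde G\times\Omega$ and noting that every loop is homotopic into $\{\tilde e\}\times\Omega$, where the function is constant $1$, its class in $\pi_1(\C^*)$ is trivial, so it admits a holomorphic logarithm $h(\tilde g,z)$ normalized by $h(\tilde e,z)=0$. Exponentiating the difference and using connectedness of $\tilde G\times\tilde G\times\Omega$ gives the cocycle identity $h(\tilde g_1\tilde g_2,z)=h(\tilde g_1,g_2z)+h(\tilde g_2,z)$. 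Then
\[
\tilde g\cdot(z,\zeta)=\bigl(gz,\ \zeta+h(\tilde g,z)\bigr)
\]
is a holomorphic action of $\tilde G$ on $\Omega\times\C$, and on $\Omega\times\C^*$ the formula $\tilde g\cdot(z,w)=(gz,\ w\,e^{h(\tilde g,z)/k})$ descends to $G_k=\tilde G/(k\Z)$. For $\tilde g\in\ker p$ one has $h(\tilde g,\cdot)\in 2\pi i\Z$ constant, and the resulting homomorphism $\ker p\to\Z$ is computed along the orbit $M$: since $\phi_*:\pi_1(M)\to\Z$ is an isomorphism, a generator winds $\phi$ once about $0$, so $\ker p$ acts by the full deck group (translation by $2\pi i$, resp. rotation by $e^{2\pi i/k}$). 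This is exactly what makes the action free and identifies the acting group as $G_k$. Freeness and total reality of the orbits follow by projecting to $\C^n$: the projection is complex linear and restricts to a local biholomorphism from each lifted orbit $N$ onto the totally real orbit $M$, so $TN\cap J\,TN$ projects into $TM\cap J\,TM=0$.

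Next I pass to a domain. The function $u=\Re\zeta-\log|\phi(z)|$ (resp. $|w|\,|\phi(z)|^{-1/k}$) is $G_k$-invariant, so for constants $a<b$ the slab $\Omega'=\{a<u<b\}$ is an invariant open set carrying a free holomorphic action with totally real orbits; the bound $|\phi|>\epsilon$ keeps the inner wall away from the locus $w=0$. Applying Proposition \ref{Common} to $\Omega'$ then yields a strongly pseudoconvex $D\subset\C^{n+1}$ with $\Aut(D)\cong G_k$, and it remains to check that the lifted action is proper and that $\Omega'$ is of bounded type.

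These last two points are where the real work lies. Properness should follow from properness of $G:\Omega$ together with proper discontinuity of the deck action in the fiber, but must be verified since $G_k$ is non-compact. The genuine obstacle is \emph{bounded type} of $\Omega'$, most seriously for $k=\infty$, where the fiber is the entire non-compact line $\Im\zeta\in\R$. An open subset of a bounded domain is again bounded, so I would aim to realize $\Omega'$ biholomorphically inside $\Omega_0\times\mathbb D$ for a bounded model $\Omega_0$ of $\Omega$, mapping each fiber strip $\{a<\Re\zeta-\log|\phi|<b\}$ conformally onto the disc. The difficulty is that the strip's position moves with the \emph{harmonic} quantity $\log|\phi(z)|$, so the naive fiberwise map is not jointly holomorphic, and the monodromy of $\log\phi$ (forced by $\phi_*$ being an isomorphism) obstructs any product trivialization. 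I expect to resolve this by playing the single-valued holomorphic invariant $s=e^{\zeta}/\phi(z)$, whose modulus already confines $\Omega'$ to a bounded annulus in $s$, against the unbounded coordinate $\zeta$ needed to separate the deck translates, and then exhibiting an explicit bounded realization. This boundedness step, rather than the algebra of the cocycle, is the crux of the proof.
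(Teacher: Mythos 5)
Your construction is, at its core, the paper's own: your cocycle $h(\tilde g,z)$ is exactly the analytic continuation $\log\phi(\tilde g z)-\log\phi(z)$ that the paper uses to lift the action, your freeness computation via $\phi_*$ is the same, and your invariant slab $\Omega'=\{a<\Re\zeta-\log|\phi(z)|<b\}$ plays the role of the paper's invariant thickening $\tilde{\tilde\Omega}=\{(z,w):|w-\log\phi(z)|<1\}$ of the graph of $\log\phi$. The genuine gap is the step you yourself flag as ``the crux'': you leave bounded type of $\Omega'$ unproved, and the route you sketch for it is misdirected. You try to map each fiber strip conformally \emph{onto} a disc, correctly observe that this cannot be done jointly holomorphically because the strip's position depends on the harmonic function $\log|\phi|$, and then appeal to the invariant $s=e^{\zeta}/\phi(z)$ without carrying the argument through. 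But nothing requires the fibers to fill a disc: bounded type only asks for a holomorphic \emph{embedding} into a bounded set, and for that there is no monodromy obstruction at all.

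The missing observation --- which is precisely where the hypothesis $|\phi|>\epsilon$ does its work, and which is the paper's one-line argument ``$|\phi|>\epsilon$ implies $\Re(\log\phi)>\log\epsilon$'' --- is this: on $\Omega'$ you have $\Re\zeta>a+\log|\phi(z)|>a+\log\epsilon$, so $\zeta$ ranges in the \emph{fixed} half-plane $H=\{\Re\zeta>a+\log\epsilon\}$, and hence $\Omega'\subset\Omega\times H$. A half-plane is biholomorphic to the unit disc, so $\Omega\times H$ is biholomorphic to the product of a bounded model of $\Omega$ with the disc, which is bounded; an open connected subset of a domain of bounded type is again of bounded type, and $\Omega'$ is connected. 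The unboundedness of $\Im\zeta$, which you treat as the main enemy, is completely harmless: the M\"obius map from $H$ to the disc absorbs it. (For $k<\infty$ the same trick reads $|w|>a|\phi(z)|^{1/k}>a\epsilon^{1/k}$ on your slab, so $(z,w)\mapsto(z,1/w)$ embeds it in a bounded set; the paper instead reruns the whole argument with $\phi^{1/k}$ in place of $\log\phi$.) With this observation inserted, your proof closes, modulo the verification that the lifted action is proper, which you flag but do not prove --- though the paper, too, only asserts this point.
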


\begin{proof}
Consider the case $k=\infty$. Then $G_\infty=\tilde G$ is the universal cover of $G$.
Shrinking $\Omega$ if necessary, we assume
$\Omega$ is a small $G$-invariant neighborhood of $M$. Then the universal cover $\tilde\Omega$ is the graph of $\log\phi$ over $\Omega$.
$$
\tilde{\Omega}=\{(z,\log\phi(z)): z\in\Omega\}\subset\C^{n+1},
$$
here all values of the logarithm are used.
The action $G:\Omega$ lifts to an action
$\tilde G:\tilde \Omega$.
Indeed, let $\tilde g\in \tilde G$ be represented by a curve
$\tilde g:[0,1]\to G$ with
$\tilde g(0)=e$, $\tilde g(1)=g$. Then we define
\begin{equation*}
\tilde g(z,\log\phi(z))=(g z, \log\phi(\tilde g z)),\quad
\tilde g\in \tilde G,\quad
z\in\Omega,
\end{equation*}
here $\log\phi(\tilde g z)=\gamma(1)$ for a continuous curve
$\gamma(t)=\log\phi(\tilde g(t) z)$ with value
$\gamma(0)=\log\phi(z)$ used in the left hand side.
With some abuse of notation, we define
$$
\tilde{\tilde\Omega}
=\{(z,w): z\in\Omega, |w-\log\phi(z)|<1 \}\subset\C^{n+1},
$$
that is, for each $z\in\Omega$, the set
$\{w\in\C: (z,w)\in\tilde{\tilde\Omega} \}$
is the union of all unit discs with centers at all values
of $\log\phi(z)$.
The action $\tilde G:\tilde \Omega$ extends to
$\tilde G:\tilde{\tilde\Omega}$ as follows.
$$
\tilde g(z,w)=(g z,w-\log\phi(z)+\log\phi(\tilde g z)), \quad
\tilde g\in \tilde G, \quad
(z,w)\in \tilde{\tilde\Omega}.
$$
Here $\log\phi(z)$ is the value satisfying $|w-\log\phi(z)|<1$,
and $\log\phi(\tilde g z)$ is the same as above.

Since $|\phi|>\epsilon$,
we have $\Re(\log\phi)>\log\epsilon$.
Then $\tilde{\tilde\Omega}$ is a domain of bounded type.

The action $\tilde G:\tilde{\tilde\Omega}$ is free, proper, and the orbits are totally real. Hence the conclusion follows by Proposition \ref{Common}.

For the group $G_k$ with $k<\infty$, the proof goes along the same lines with $\phi^{1/k}$ in place of $\log\phi$. We leave the details to the reader.
\end{proof}

\section{Proof of main result}
We apply the results of the previous section to groups locally isomorphic to $SL(2,\R)$.

Let $G=PSL(2,\R):=SL(2,\R)/\{\pm I\}$.

Let $\Hi=\{z\in\C: \Im z>0\}$ be the upper half-plane.

Then $G:\Hi$ by fractional-linear transformations as follows.

\begin{equation*}
g z=\frac{az+b}{cz+d},\quad
g=\pm\begin{pmatrix}
         a & b \\
         c & d
       \end{pmatrix}\in G,\quad
z\in\Hi.
\end{equation*}

Define $G:\Hi^3\subset\C^3$,
\begin{equation*}
g (z_1,z_2,z_3)=(g z_1,g z_2,g z_3).
\end{equation*}

On the subset of all triples with distinct components, this action is free, proper, and the orbits are totally real.
We now look for a function $\phi$ for the action $G:\Hi^3$ to apply Proposition \ref{Cover}.

The group $G$ also acts on the complexification $G^c=PSL(2,\C)$, which is not a domain in $\C^n$.
Fix $\zeta=(\zeta_1,\zeta_2,\zeta_3)\in\Hi^3$ with distinct components. Define a map
\begin{align*}
& \Phi:G^c\to \C^3, \\
& \Phi:G^c\ni h=\pm\begin{pmatrix}
         a & b \\
         c & d
       \end{pmatrix}\mapsto
h\zeta=(h\zeta_1,h\zeta_2,h\zeta_3)\in\C^3.
\end{align*}
The map $\Phi$ is holomorphic, injective, and commutes with the actions $G:\C^3$ and $G:G^c$.
%The latter follows because the map
%$$GL(2,\C)\ni h=\begin{pmatrix}
%         a & b \\
%         c & d
%       \end{pmatrix}\mapsto
%\left(z\mapsto\frac{az+b}{cz+d}\right)\in
%\Aut(\C\Pe^1)$$
%is a homomorphism.
The map $\Phi$ reduces the construction of $\phi$ to
$G^c=PSL(2,\C)$.

We consider $G\subset G^c$ as the orbit of the identity matrix $I$.
We need a holomorphic function $\phi:G^c\to\C^*$
such that $\phi_*:\pi_1(G)\to\pi_1(\C^*)=\Z$ is an isomorphism.

For
$g=\begin{pmatrix}
         a & b \\
         c & d
\end{pmatrix}\in SL(2,\C)$, we preliminary define $\phi(g)=a+ic$.
Then $\phi: SO(2,\R)\to \T$ is an isomorphism, here
$SO(2,\R)\subset SL(2,\R)$ is the group of all real orthogonal matrices with
determinant 1, $\T\subset\C$ is the unit circle.
Hence
$\phi_*:\pi_1(SL(2,\R))\to\Z$ is an isomorphism.

For $g=\pm\begin{pmatrix}
         a & b \\
         c & d
\end{pmatrix}\in G^c=PSL(2,\C)$, we can define $\phi(g)=(a+ic)^2$.
Then
$\phi_*:\pi_1(G)\to\Z$ is again an isomorphism.
However, one can see that this function
$\phi$ has zeros in any $G$-invariant neighborhood of $I$.

Finally, for $g=\pm\begin{pmatrix}
         a & b \\
         c & d
\end{pmatrix}\in G^c$,
we define
$$
\phi(g)=\frac{1}{4} ((a+d)+i(c-b))^2.
$$
This function $\phi$ coincides with the previous version
on the orthogonal group. Then
$\phi_*:\pi_1(G)\to\Z$ is again an isomorphism.
This function $\phi$ is bounded away from 0 on a $G$-invariant neighborhood of $I$ according to the following lemma.

\begin{lemma}
There exist $\epsilon>0$ and $\delta>0$ such that for every $g\in G$ and $h\in G^c$, $|h-I|<\delta$ implies $|\phi(gh)|>\epsilon$.
\end{lemma}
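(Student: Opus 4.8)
The plan is to rewrite $\phi$ as a rank-one quadratic form and to use that, on the real form $SL(2,\R)\subset SL(2,\C)$, this form grows like the size of the matrix, so a small complex perturbation cannot bring it near zero. Set $v=\begin{pmatrix}1\\-i\end{pmatrix}\in\C^2$, with conjugate transpose $v^*=(1,\ i)$. A direct computation gives $v^*gv=(a+d)+i(c-b)$ for $g=\begin{pmatrix}a&b\\c&d\end{pmatrix}$, so that
$$\phi(g)=\tfrac14\,(v^*gv)^2 ;$$
the square removes the sign ambiguity in $G^c=PSL(2,\C)$, so this is the genuine object on $G^c$.

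Next I would record the crucial identity on the real form. For $g\in SL(2,\R)$ the entries are real and $ad-bc=1$, hence
$$|v^*gv|^2=(a+d)^2+(c-b)^2=a^2+b^2+c^2+d^2+2(ad-bc)=\|g\|^2+2,$$
where $\|g\|^2=a^2+b^2+c^2+d^2$. Since $a^2+d^2\ge 2|ad|$ and $b^2+c^2\ge 2|bc|$ give $\|g\|^2\ge 2|ad-bc|=2$ on $SL(2,\R)$, this already yields $|\phi(g)|=\tfrac14(\|g\|^2+2)\ge 1$ on $G$ itself --- the uniform nonvanishing that the two rejected candidates for $\phi$ lacked. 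This identity is the heart of the matter: $|v^*gv|$ is comparable to $\|g\|$, and in particular is \emph{large} exactly where $g$ is large.

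Finally I would perturb. Writing $v^*ghv=v^*gv+v^*g(h-I)v$ and estimating the second term by $|v^*g(h-I)v|\le\|v\|^2\,\|g\|_{\mathrm{op}}\,\|h-I\|_{\mathrm{op}}\le C\|g\|\,|h-I|$ (a fixed constant $C$ absorbing $\|v\|^2=2$, the bound $\|g\|_{\mathrm{op}}\le\|g\|$, and the equivalence of matrix norms), I get, for $|h-I|<\delta$,
$$|v^*ghv|\ge\sqrt{\|g\|^2+2}-C\delta\,\|g\| .$$
The main obstacle is that $G$ is non-compact, so $g$ runs over an unbounded set and no compactness argument applies; moreover both terms on the right grow linearly in $\|g\|$. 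The estimate survives precisely because the leading term has coefficient $1$ while the error carries the small factor $\delta$: choosing $\delta$ with $C\delta\le\tfrac12$ gives $\sqrt{\|g\|^2+2}-C\delta\|g\|\ge(1-C\delta)\|g\|\ge\tfrac{\sqrt2}{2}$ uniformly in $g\in G$, whence $|\phi(gh)|=\tfrac14|v^*ghv|^2\ge\tfrac18$. Thus $\epsilon=\tfrac18$ and any such $\delta$ work, and the only genuine content is the identity $|v^*gv|^2=\|g\|^2+2$ established in the second step.
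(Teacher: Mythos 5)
Your proof is correct, and it follows a genuinely different (and cleaner) route than the paper's, even though both rest on the same algebraic kernel, namely $(a+d)^2+(c-b)^2=a^2+b^2+c^2+d^2+2\det g$. The paper applies this identity to \emph{complex} matrices and argues by contradiction: writing $\psi(g)=(a+d)+i(c-b)$, squaring the relation $(a+d)=i(b-c)+\psi$ and taking real parts, it shows that $|\psi(g)|\le 1/3$ forces $|\Re g|\le 2|\Im g|$ for every $g\in SL(2,\C)$; since for $g$ real and $|h-I|<\delta$ one has $|\Im(gh)|\le |g|\delta$ while $|\Re(gh)|\ge |g|(1-\delta)$, this is impossible once $\delta<1/3$. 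You instead restrict the identity to the real form, where it becomes the exact formula $|v^*gv|^2=|g|^2+2$ (your $\|g\|$ is the paper's Euclidean norm $|g|$), and then exploit the \emph{linearity} of $g\mapsto v^*gv$ to split $v^*ghv=v^*gv+v^*g(h-I)v$ and bound the error term by $C\delta|g|$ via Cauchy--Schwarz --- a step the paper never makes explicit. The underlying mechanism is the same in both arguments (the main term grows like $|g|$, the perturbation only like $\delta|g|$, and $|g|\ge\sqrt 2$ on $SL(2,\R)$ keeps everything uniformly bounded below), but your direct triangle-inequality version avoids the contradiction, dispenses with the paper's ad hoc inequality $4u\epsilon\le\frac{3}{5}u^2+\frac{20}{3}\epsilon^2$, and produces explicit constants $\delta=1/4$, $\epsilon=1/8$, whereas the paper's claim about complex matrices with small $\psi$ is a slightly stronger structural statement that it then only uses once. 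The one point you should state a bit more carefully --- as the paper does in its opening line --- is the reduction from $PSL$ to $SL$: one must choose representatives $g_0\in SL(2,\R)$, $h_0\in SL(2,\C)$ with $|h_0-I|<\delta$, and your remark that squaring kills the sign ambiguity is exactly what makes $|\phi(gh)|=\frac14|v^*g_0h_0v|^2$ independent of these choices.
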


\begin{proof}
It suffices to prove the lemma for $G=SL(2,\R)$ instead of $PSL(2,\R)$ and
$\psi(g)=(a+d)+i(c-b)$ instead of $\phi$.

We claim that for some small $\epsilon>0$, if
$g\in SL(2,\C)$ and
$|\psi(g)|\le\epsilon$, then $|\Re g|\le 2|\Im g|$. Here $\Re$ and $\Im$ are applied to each entry of $g$, and
$|g|^2=|a|^2+|b|^2+|c|^2+|d|^2$ is the Euclidean norm.
We have
$$
(a+d)=i(b-c)+\psi, \quad
|\psi|\le \epsilon.
$$
By squaring both parts and using $\det g=1$, we obtain
\begin{align*}
& a^2+b^2+c^2+d^2+2=2i(b-c)\psi+\psi^2, \\
& \Re(a^2+b^2+c^2+d^2)+2\le 2\epsilon |b-c|+\epsilon^2, \\
& |\Re g|^2-|\Im g|^2+2\le
4\epsilon(|\Re g| + |\Im g|)+\epsilon^2.
\end{align*}
By applying the inequality
$4u\epsilon\le \frac{3}{5}u^2+\frac{20}{3}\epsilon^2$, we obtain
$$
|\Re g|^2\le 4|\Im g|^2+5\left(\frac{43}{6}\epsilon^2-1\right).
$$
Hence the claim holds, say for $\epsilon=1/3$.

We now prove that if for some small constant $\delta>0$,
$g\in G$, $h\in G^c$, $|h-I|<\delta$, then $|\psi(gh)|>\epsilon$. Suppose otherwise
$|\psi(gh)|\le\epsilon$. Then by the above claim,
$|\Re (gh)|\le 2|\Im (gh)|$.

Since $\Im g=0$, we have
$|\Im(gh)|=|\Im(g(h-I))|\le |g|\delta$.
We also have
$|\Re(gh)|=|g+\Re(g(h-I))|\ge |g|-|\Re(g(h-I))|\ge |g|(1-\delta)$.

Combining the above inequalities, we have
$|g|(1-\delta)\le 2|g|\delta$. Since $g\ne 0$, we get
$1-\delta\le 2\delta$, $\delta\ge 1/3$.
Hence, for  $\delta< 1/3$, we obtain the desired conclusion.
\end{proof}

This lemma concludes the proof of the main result.

\section{Another example}

We give another example of a non-linear Lie group $G$ that can be realized as $\Aut(D)$ for a bounded domain $D$.
Following \cite{Onishchik},
we introduce $G$ as a quotient of the Heisenberg group $\tilde G$ as follows.
$$
\tilde G=\left\{
g= \begin{pmatrix}
1 & a & c \\
0 & 1 & b \\
0 & 0 & 1
       \end{pmatrix} : a,b,c\in\R \right\}, \quad
H=\{g\in\tilde G: a=b=0, c\in\Z \}, \quad
G=\tilde G/H.
$$
The group $G$ is non-linear (see \cite{Onishchik}).
We describe it directly as
\begin{align*}
& G=\R\times\R\times\T, \\
& (a,b,c)(x,y,z)=(a+x, b+y, cz e^{iay}).
\end{align*}
Here $\T$ denotes the unit circle in $\C$.
The group $G$ has the obvious complexification
$$
G^c=\C\times\C\times\C^*, \quad
\C^*=\C\setminus\{0\}.
$$
The group $G$ acts on $G^c$ by left translations.
We claim that there is a $G$-invariant domain $\Omega$ of bounded type, and by Proposition \ref{Common} there is a domain $D\subset\C^3$ such that
$\Aut(D)$ is isomorphic to $G$.

Indeed, let $\Omega=GU$,
$$
U=\{(x,y,z)\in G^c: |x|<1, |y|<1, |z|<2\}.
$$
We show that $\Omega$ is of bounded type.
Let $(u,v,w)\in \Omega$. Then
$$
u=a+x, \quad
v=b+y, \quad
w=cze^{iay}, \quad
(a,b,c)\in G, \quad
(x,y,z)\in U.
$$
Then
$$
|\Im u|<1,\quad
|\Im v|<1,\quad
|w|=|z|e^{\Re(iay)}<2e^{|a|}.
$$
Since $|\Im u|<1$, we have $|a|\le |u|+1$, hence there exists a constant $C>0$ such that
$|w|<C|e^{u^2}|$.

Put $w'=w+2Ce^{u^2}$.
By increasing $C$ if necessary, we have
$$
|w'|\ge 2C|e^{u^2}|-|w|>C|e^{u^2}|>1.
$$

Then $(u,v,w)\mapsto (u,v,w')$ biholomorphically maps $\Omega$ to a domain of bounded type, as desired.

\end{document}